\newtheorem{theorem}{Theorem}
\newtheorem{proposition}{Proposition}
\newtheorem{corollary}[theorem]{Corollary}
\newtheorem{lemma}{Lemma}
\theoremstyle{definition}
\theoremstyle{remark}
\DeclareMathOperator{\tr}{tr}
\DeclareMathOperator{\diam}{diam}
\DeclareMathOperator{\inrad}{inrad}
\begin{document}

\title[]{Topological bounds for Fourier coefficients\\ and applications to torsion}
\keywords{Level sets, Torsion function, Spectral gap, Fourier series.}
\subjclass[2010]{35J15, 35B05 and 35B51 (primary), 42A05 and 42A16 (secondary)} 

\author[]{Stefan Steinerberger}
\address{Department of Mathematics, Yale University}
\email{stefan.steinerberger@yale.edu}

\begin{abstract} Let $\Omega \subset \mathbb{R}^2$ be a bounded convex domain in the plane and consider 
\begin{align*}
 -\Delta u &=1 \qquad \mbox{in}~\Omega  \\
u &= 0 \qquad \mbox{on}~\partial \Omega.
\end{align*}
If $u$ assumes its maximum in $x_0 \in \Omega$, then the eccentricity of level sets close to the maximum is determined by the
Hessian $D^2u(x_0)$. We prove that $D^2u(x_0)$ is negative definite and give a quantitative bound on the spectral gap
$$ \lambda_{\max}\left(D^2u(x_0)\right)  \leq  - c_1\exp\left( -c_2\frac{\diam(\Omega)}{\inrad(\Omega)} \right) \qquad \mbox{for universal}~c_1,c_2>0.$$
This is sharp up to constants. 
The proof is based on a new lower bound for Fourier coefficients whose proof has a topological component: if $f:\mathbb{T} \rightarrow \mathbb{R}$ is continuous and has $n$ sign changes, then
$$ \sum_{k=0}^{n/2}{  \left| \left\langle f, \sin{kx} \right\rangle \right| +    \left| \left\langle f, \cos{kx} \right\rangle \right|  }   \gtrsim_n   \frac{ | f\|^{n+1}_{L^1(\mathbb{T})}}{  \| f\|^{n }_{L^{\infty}(\mathbb{T})}}.$$
This statement immediately implies estimates on higher derivatives of harmonic functions $u$ in the unit ball: if $u$ is very flat in the origin, then the boundary function $u(\cos{t}, \sin{t}):\mathbb{T} \rightarrow \mathbb{R}$ has to have either large amplitude or many roots. It also implies that the solution of the heat equation starting with $f:\mathbb{T} \rightarrow \mathbb{R}$
cannot decay faster than $\sim\exp(-(\# \mbox{sign changes})^2 t/4)$.
\end{abstract}

\maketitle
\vspace{-20pt}
\section{Introduction}
\subsection{Introduction.} When studying the solution of elliptic equations on planar domains, there is a clear tendency for level sets to become
more elliptical and regular. Different aspects of this phenomenon have been studied for a long time and there are many classical results \cite{bras, caf, caf2, cha, gab, kawohl, lo1, lo2, or,rosay,weinkove};
these results are usually centered around the question whether the level sets are convex or remain convex if the underlying domain is convex. Despite a lot of work,
the subject is still not thoroughly understood. A very simple question is whether the solution of $-\Delta u = f(u)$ on a convex domain $\Omega$ with Dirichlet boundary conditions $\partial \Omega$
always has convex level sets -- this is fairly natural to assume (see e.g. P.-L. Lions \cite{lions}) and was only very recently answered in the negative by Hamel, Nadirashvili and Sire \cite{hamel}.
\vspace{-5pt}
\begin{center}
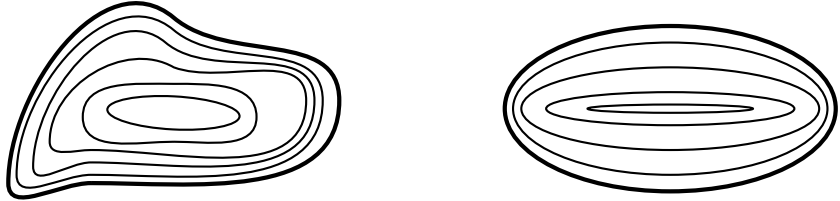
\begin{figure}[h!]
\begin{tikzpicture}[scale=1.1]
\draw[ultra thick] (0,0) to[out=0, in =270] (3,1)  to[out=90, in =320] (1,2) to[out=140, in =90] (-1,0)  to[out=270, in =180] (0,0)    ;
\draw[thick] (0,0.1) to[out=0, in =270] (2.8,1)  to[out=90, in =320] (1,1.85) to[out=140, in =90] (-0.9,0.1)  to[out=270, in =180] (0,0.1)    ;
\draw[thick] (0,0.25) to[out=0, in =270] (2.7,1)  to[out=90, in =320] (0.9,1.7) to[out=140, in =90] (-0.7,0.2)  to[out=270, in =180] (0,0.25)    ;
\draw[thick] (0,0.4) to[out=0, in =270] (2.6,1)  to[out=90, in =340] (1,1.4) to[out=150, in =90] (-0.5,0.5)  to[out=270, in =180] (0,0.4)    ;
\draw[thick] (1,0.5) to[out=0, in =270] (2,0.8)  to[out=90, in =360] (1,1.2) to[out=180, in =90] (-0.1,0.8)  to[out=270, in =180] (1,0.5)    ;
\draw[rotate=-3,thick] (0.95,0.9) ellipse (0.8cm and 0.2cm);
\draw[rotate=0,thick] (7,0.9) ellipse (1cm and 0.05cm);
\draw[rotate=0,thick] (7,0.9) ellipse (1.5cm and 0.2cm);
\draw[rotate=0, ultra thick] (7,0.9) ellipse (2cm and 1cm);
\draw[rotate=0, thick] (7,0.9) ellipse (1.8cm and 0.5cm);
\draw[rotate=0, thick] (7,0.9) ellipse (1.9cm and 0.8cm);
\end{tikzpicture}
\captionsetup{width=0.9\textwidth}
\caption{A cartoon picture of what 'typical level sets' of $-\Delta u = f(u)$ could look like (left) and how they probably will not look like (right).}
\end{figure}
\end{center}

\section{Main Results}
\subsection{The torsion function} We consider, for bounded, convex $\Omega \subset \mathbb{R}^2$, the torsion function
\begin{align*}
 -\Delta u &=1 \qquad \mbox{in}~\Omega  \\
u &= 0 \qquad \mbox{on}~\partial \Omega.
\end{align*}
The torsion function is perhaps \textit{the} classical object in the study of level sets of elliptic equations. Makar-Limanov \cite{makar} proved that  $\sqrt{u}$
is concave on planar convex domains: it follows that level sets are convex and that there is a unique global maximum (see \cite{kawohl, ken} for higher dimensions).
We observe, by simple Taylor expansion, that the eccentricity of the level sets close to the point $x_0$ in which the (unique) maximum is achieved is determined by the eigenvalues of
the Hessian $D^2u(x_0)$. Clearly, every eigenvalue $\lambda$ of $D^2u(x_0)$ satisfies $\lambda \leq 0$. Moreover,
$$ \lambda_1 + \lambda_2 = \tr D^2u(x_0) = \Delta u(x_0) = -1,$$
which means that ellipses can only be highly eccentric if one of the two eigenvalues is close to 0. 
Our main result provides a sharp upper bound on eigenvalues of the Hessian in the maximum.

\begin{theorem} Let $\Omega \subset \mathbb{R}^2$ be a bounded, planar, convex domain and assume the solution of
\begin{align*}
 -\Delta u &=1 \qquad \mbox{in}~\Omega  \\
u &= 0 \qquad \mbox{on}~\partial \Omega.
\end{align*}
 assumes its maximum in $x_0 \in \Omega$. There are universal constants $c_1, c_2>0$ such that 
$$ \lambda_{\max}\left(D^2u(x_0)\right)  \leq  - c_1\exp\left( -c_2\frac{\diam(\Omega)}{\inrad(\Omega)} \right).$$
\label{thm:torsion}
\end{theorem}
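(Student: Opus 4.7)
The plan is to translate the Hessian bound into a quantitative upper bound on the second harmonic Fourier coefficient of $u$ at $x_0$, and then to invoke the topological Fourier inequality stated in the abstract. Set $h(x) := u(x) + \tfrac{1}{4}|x-x_0|^2$; since $-\Delta u = 1$, $h$ is harmonic on $\Omega$, with $\nabla h(x_0) = 0$ and $D^2 h(x_0) = D^2 u(x_0) + \tfrac{1}{2} I$. Harmonicity forces $\tr D^2 h(x_0) = 0$, so $D^2 h(x_0)$ has eigenvalues $\pm \mu$ with $\mu \ge 0$, and the theorem is equivalent to the estimate $\mu \le \tfrac{1}{2} - c_1 \exp(-c_2\diam(\Omega)/\inrad(\Omega))$. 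Writing the harmonic Taylor expansion
\[
h(x_0 + re^{i\theta}) - h(x_0) = r^2(a_2\cos 2\theta + b_2 \sin 2\theta) + \sum_{k\ge 3} r^k(a_k \cos k\theta + b_k \sin k\theta)
\]
(the $k=1$ term vanishes because $\nabla h(x_0) = 0$), one reads off $\mu = 2\sqrt{a_2^2+b_2^2}$, so the goal reduces to an exponentially sharp upper bound on $\sqrt{a_2^2+b_2^2}$ in terms of $\diam/\inrad$.

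Fix a radius $R$ slightly smaller than the distance from $x_0$ to $\partial\Omega$ and look at the trace
\[
f(\theta) = u(x_0 + Re^{i\theta}) = \bigl(u(x_0)-\tfrac{R^2}{4}\bigr) + R^2\sqrt{a_2^2+b_2^2}\cos(2(\theta-\theta_0)) + \sum_{k\ge 3}R^k(a_k\cos k\theta + b_k \sin k\theta),
\]
which is non-negative on $\partial B(x_0,R)$ and vanishes at any touching point with $\partial\Omega$. The extremal bound $\sqrt{a_2^2+b_2^2} = 1/4$ is attained in the limit of an infinite strip, where the series truncates after $k=2$; for a bounded convex $\Omega$ the higher modes $a_k, b_k$ are not free, because they are dictated by the Dirichlet data $h|_{\partial\Omega} = |x-x_0|^2/4$ on the full boundary of $\Omega$. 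This global constraint is what has to push $\sqrt{a_2^2+b_2^2}$ strictly below $1/4$, quantitatively, and it is precisely the rigidity captured by the topological Fourier bound: a harmonic function on a disk which is very flat at the centre must have a boundary trace with either large amplitude or many sign changes.

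The concrete strategy is to manufacture an auxiliary function $F:\mathbb{T}\rightarrow\mathbb{R}$ --- for instance the residual $f$ after subtracting its $k=0$ and $k=2$ parts, or the restriction of $u$ to a thin elongated ellipse tracking the long axis of $\Omega$ --- whose first few Fourier coefficients are forced to be small whenever $\sqrt{a_2^2+b_2^2}$ is close to $1/4$, and whose number of sign changes is driven, via the Dirichlet data together with Makar-Limanov concavity of $\sqrt u$, to be at least $n \asymp \diam/\inrad$. Applying the topological Fourier inequality to $F$ with this $n$,
\[
\sum_{k=0}^{n/2}\bigl(|\langle F,\cos kx\rangle| + |\langle F,\sin kx\rangle|\bigr) \gtrsim_n \frac{\|F\|_{L^1}^{n+1}}{\|F\|_{L^\infty}^n},
\]
and arranging $\|F\|_{L^1}/\|F\|_{L^\infty}$ to stay bounded away from $1$ by a universal constant makes the right-hand side of order $\exp(-c_2\diam/\inrad)$; unwinding the construction of $F$ then translates into the required $\tfrac{1}{4}-\sqrt{a_2^2+b_2^2} \gtrsim \exp(-c_2\diam/\inrad)$. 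The main obstacle is precisely the construction of $F$: one needs a single object on $\mathbb{T}$ whose sign changes are genuinely forced by the global ratio $\diam/\inrad$ (rather than by fine-scale features of $\partial\Omega$), and whose low Fourier content decouples cleanly from the unknown higher-order coefficients $a_k, b_k$. Performing this decoupling --- most likely through an iteration or reflection argument along the long axis of $\Omega$ --- is where I expect the technical heart of the proof to live.
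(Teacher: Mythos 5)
Your proposal diverges from the paper's proof at the very first step and, more importantly, the strategy you outline cannot yield the claimed exponential rate. The central difficulty is the role of $n$ in the Fourier inequality. You want to construct an auxiliary $F:\mathbb{T}\to\mathbb{R}$ with $n\asymp\diam(\Omega)/\inrad(\Omega)$ sign changes and then let the factor $(\|F\|_{L^1}/\|F\|_{L^\infty})^{n}$ produce the $\exp(-c_2\,\diam/\inrad)$. But the constant $c_n$ in Theorem~\ref{thm:fourier} is an \emph{uncontrolled} compactness constant; the paper explicitly notes there is no control over it, and in the heat-equation corollary the implicit constant already decays at least like $e^{-n^2 t}$. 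Applying the inequality with $n$ growing linearly in $\diam/\inrad$ therefore gives nothing quantitative --- the unknown $c_n$ may already be smaller than the bound you are trying to prove. This is a fatal structural obstruction, not merely a technical gap.

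The paper does precisely the opposite: it arranges for the relevant boundary function to have \emph{few} (exactly four) sign changes, so that Corollary~3 is applied with a \emph{fixed} small $n$ and a fixed universal constant, and the exponential factor enters through the $L^1$ versus $L^\infty$ boundary norms rather than through $n$. Concretely, the paper compares $u$ with the explicit torsion function $v$ on a long rectangle $R=[-10\diam(\Omega),10\diam(\Omega)]\times[-a,a]$ whose half-width $a$ is calibrated so that $v(0,0)=u(0,0)$. Then $v-u$ is harmonic on $\Omega\cap R$, vanishes to first order at $x_0$, and a case analysis shows its boundary trace has exactly four sign changes. Lemma~\ref{simple} (order of vanishing versus boundary roots) then forces the Hessian of $v-u$ to be nondegenerate. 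To quantify this, one conjugates by the Riemann map $\phi:\mathbb{D}\to\Omega\cap R$, uses the Koebe quarter theorem to compare Hessians at the origin, and uses harmonic-measure distortion estimates on a long thin domain to show $\|(v-u)\circ\phi\|_{L^1(\partial\mathbb{D})}\gtrsim\exp(-\beta\,\diam/\inrad)$ --- this is where the exponential genuinely comes from. Your proposal contains none of these ingredients: no comparison object $v$, no Riemann map, no harmonic-measure input, and no mechanism to keep the sign-change count bounded.

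Your reformulation via $h=u+\tfrac14|x-x_0|^2$ is correct as far as it goes, and the observation that the problem reduces to bounding the second harmonic coefficient of $h$ on a circle is accurate; but the trace $u(x_0+Re^{i\theta})$ is nonnegative (hence has no sign changes at all), so the topological Fourier bound does not engage with it directly. You acknowledge that the construction of $F$ is "where the technical heart of the proof lives" --- that is exactly right, but without identifying the rectangle comparison and the harmonic-measure mechanism, the proposal does not close.
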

This result has the sharp scaling. A simple example (given below) shows that the level sets can be extremely degenerate: close to the maximum, they can look like ellipses with eccentricity that is \textit{exponential} in $\diam(\Omega)/\inrad(\Omega)$ but not more eccentric than that. It is known \cite{keady} that
$$ \log{\left(\frac{1}{4} - \det(D^2 u)\right)} \qquad \mbox{is harmonic}$$
in points satisfying $\det(D^2 u) < 1/4$. This implies that there is nothing particularly special about the value of  $\det(D^2 u(x))$ in $x_0$ and any neighborhood contains points where it is larger and points where it is smaller. Moreover, the determinant will tend
 to 0 in the vicinity of those parts of the boundary that are line segments. On domains with strictly convex boundary, the behavior is much more regular and classical methods
apply; we prove the following simple result.

\begin{proposition} Let $\partial \Omega$ have curvature $\kappa > 0$ and assume the solution of
\begin{align*}
 -\Delta u &=1 \qquad \mbox{in}~\Omega  \\
u &= 0 \qquad \mbox{on}~\partial \Omega.
\end{align*}
 assumes its maximum in $x_0 \in \Omega$. Then, for a universal constant $c>0$,
$$ \lambda_{\max}\left(D^2u(x_0)\right)  \leq - \frac{c}{\inrad(\Omega)^2}\frac{ \min_{\partial \Omega}{\kappa} }{ \max_{\partial \Omega}{\kappa^3}}.$$
\end{proposition}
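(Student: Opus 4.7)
The plan is to combine Keady's identity with barrier comparisons on the boundary. First I would reduce the problem to a lower bound on $\det D^2 u(x_0)$: since $\tr(D^2 u(x_0)) = \Delta u(x_0) = -1$ and $D^2 u(x_0) \leq 0$ (by Makar--Limanov concavity of $\sqrt u$), both eigenvalues are non-positive and sum to $-1$, so
\[
\lambda_{\max}(D^2 u(x_0)) \;=\; -\tfrac{1}{2} + \sqrt{\tfrac{1}{4} - \det D^2 u(x_0)}.
\]
Writing $g := \tfrac{1}{4} - \det D^2 u$, the Keady identity that $\log g$ is harmonic rearranges into $g\,\Delta g = |\nabla g|^{2}\geq 0$, so $g$ is subharmonic. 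The maximum principle then yields $g(x_0)\leq \max_{\partial\Omega} g$, i.e.\ $\det D^2 u(x_0)\geq \min_{\partial\Omega}\det D^2 u$, transferring the problem to the boundary.

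The second step is a tangent-normal frame calculation at $p\in\partial\Omega$. Differentiating $u|_{\partial\Omega}\equiv 0$ twice along the boundary arc and using $\Delta u = -1$ gives $u_{\tau\tau} = -\kappa u_n$ and $u_{nn} = \kappa u_n - 1$, whence
\[
\det D^2 u(p) \;=\; \kappa u_n(1 - \kappa u_n) - u_{\tau n}^{2}.
\]
Now I would estimate the two ingredients by Blaschke's rolling theorem in both directions: the hypothesis $\kappa\leq \kappa_{\max}$ places an inscribed osculating disk of radius $1/\kappa_{\max}$ tangent at each boundary point, and comparison of torsions yields $u_n \geq 1/(2\kappa_{\max})$; dually, $\kappa\geq \kappa_{\min}$ puts $\Omega$ inside a tangent exterior disk of radius $1/\kappa_{\min}$ and yields $u_n\leq 1/(2\kappa_{\min})$. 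The tangential derivative $u_{\tau n}=(u_n)_s$ along $\partial\Omega$ is bounded by a similar barrier/regularity argument, with the control involving $\kappa_{\min}$, $\kappa_{\max}$, and $\inrad(\Omega)$.

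The hard part is extracting the uniform lower bound $\min_{\partial\Omega} \det D^2 u \gtrsim \kappa_{\min}/(\inrad(\Omega)^{2}\kappa_{\max}^{3})$ from these ingredients. The naive rolling-theorem bounds permit $\kappa u_n$ to exceed $\tfrac{1}{2}$---and even $1$---when $\kappa_{\max}/\kappa_{\min}$ is large, which risks making $\kappa u_n(1-\kappa u_n)$ unfavorably small or negative; one therefore needs either a sharper barrier showing $u_{nn}\leq 0$ on $\partial\Omega$ (equivalently $\kappa u_n \leq 1$), or a quantitative estimate on $u_{\tau n}$ precisely in the regime $\kappa u_n \approx 1$. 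Tracking how $\kappa_{\min}$, $\kappa_{\max}$, and $\inrad(\Omega)$ enter each of these barrier comparisons is what yields the cubic dependence on $\kappa_{\max}$ in the final inequality.
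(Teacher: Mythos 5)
Your strategy is genuinely different from the paper's, and the structural analogy makes the comparison instructive. The paper works with the Makar-Limanov function
$$P(u) = \left\langle \nabla u, (D^2u)\nabla u\right\rangle - |\nabla u|^2\Delta u + u\left((\Delta u)^2 - D^2u\cdot D^2u\right),$$
which is superharmonic, so $P(x_0)\geq\min_{\partial\Omega}P$. You instead use Keady's identity to argue that $g=\tfrac14-\det D^2u$ is subharmonic and get $\det D^2u(x_0)\geq\min_{\partial\Omega}\det D^2u$. Both moves transfer the interior estimate to the boundary via the maximum principle, and both ultimately aim at a lower bound on $\det D^2u(x_0)$ (in the paper, $P(x_0) = \|u\|_\infty\cdot 2\det D^2u(x_0)$ because $\nabla u(x_0)=0$). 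The decisive advantage of the paper's choice appears exactly where you acknowledge the gap: on $\partial\Omega$, the factor $u$ vanishes, so $P\big|_{\partial\Omega}$ reduces to $\left\langle\nabla u, (D^2u)\nabla u\right\rangle + |\nabla u|^2$, a quantity built from first derivatives and a single directional second derivative for which Payne \& Philippin have already proved the bound $P\big|_{\partial\Omega}\geq\tfrac18\,\min_{\partial\Omega}\kappa/\max_{\partial\Omega}\kappa^3$. The $\inrad(\Omega)^{-2}$ in the final statement then drops out cleanly from $\|u\|_\infty\sim\inrad(\Omega)^2$, which multiplies $\det D^2u(x_0)$ in $P(x_0)$.

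Your route, by contrast, must control $\min_{\partial\Omega}\det D^2u = \min_{\partial\Omega}\bigl[\kappa u_n(1-\kappa u_n) - u_{\tau n}^2\bigr]$ directly, and you correctly flag that this is where things go wrong. Two things are missing. First, the term $-u_{\tau n}^2$ has the wrong sign, and you have not given a mechanism to show it is dominated by $\kappa u_n(1-\kappa u_n)$; without that, there is no guarantee $\det D^2u$ is even nonnegative on $\partial\Omega$, let alone bounded below by a positive multiple of $\kappa_{\min}/(\inrad^2\kappa_{\max}^3)$. Second, the rolling-theorem bounds $1/(2\kappa_{\max})\leq u_n\leq 1/(2\kappa_{\min})$ do not by themselves prevent $\kappa u_n$ from exceeding $1$ when $\kappa_{\max}/\kappa_{\min}$ is large, in which case $\kappa u_n(1-\kappa u_n)<0$. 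These are not bookkeeping issues but the actual analytic difficulty, and your proposal leaves them open. A reader wishing to complete your argument would essentially have to reproduce the Payne--Philippin boundary analysis (or prove something equivalent to it); the paper simply cites that work for the boundary bound and lets the Makar-Limanov $P$ do the transfer, avoiding the problematic $u_{\tau n}^2$ term entirely because $u=0$ kills the Hessian block from which it comes.

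A minor additional point: your subharmonicity claim $g\Delta g = |\nabla g|^2\geq 0$ requires $g>0$ throughout $\Omega$ (Keady's identity is stated under $\det D^2u<1/4$), and you should justify why this holds on the whole domain before applying the maximum principle.
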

We do not believe this to be the sharp result but it clearly shows that an exponentially small spectral gap requires parts of the boundary to have very small curvature.  While we are not aware of any result on general convex domains in the literature, the special case of strictly convex domains has been studied by Payne \& Philippin \cite{payne} and our approach to Proposition 1 is inspired by their paper.
One possible application of the result is in the study of Brownian motion: we recall that the torsion function $u(x)$ also describes the expected lifetime of Brownian motion $\omega_x(t)$
started in $x$ until it first touches the boundary. Our results may be understood as a stability property of the maximal lifetime of
Brownian motion on a long, thin convex domain: if one moves away from the point in which lifetime is maximized, then the expected lifetime
in a neighborhood is determined by the eccentricity of the level set.

\subsection{Topological bounds for Fourier coefficients} Somewhat to our surprise, part of our proof turned out to require new results for Fourier series. The first result is so simple that we can prove it in very few lines (more quantitative
versions of the statement are discussed further below). It is not new and well known in the context of Sturm-Liouville theory (see e.g. \cite{sturm}).

\begin{lemma} If $f:\mathbb{T} \rightarrow \mathbb{R}$ is continuous, orthogonal to $\left\{1, \sin{x}, \cos{x}, \sin{2x}, \dots, \cos{nx}\right\}$
and has $2n+2$ roots, then $f$ cannot be orthogonal to both $\sin{(n+1)x}$ and $\cos{(n+1)x}$.
\label{lemma:top}
\end{lemma}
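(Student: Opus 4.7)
I would argue by contradiction using an oscillation-style test function of the kind familiar from Chebyshev system theory. Assume toward a contradiction that $f$ is orthogonal to $\sin((n+1)x)$ and $\cos((n+1)x)$ in addition to the listed functions; then $f$ is orthogonal to every element of the $(2n+3)$-dimensional space $V_{n+1}$ of real trigonometric polynomials of degree at most $n+1$. The aim is to exhibit a nonzero $p \in V_{n+1}$ for which $pf \ge 0$ pointwise on $\mathbb{T}$, since then $\langle f,p\rangle>0$ contradicts $f \perp V_{n+1}$.

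The test function is built from the roots of $f$ themselves. Labelling these cyclically as $x_1 < x_2 < \cdots < x_{2n+2}$ in $[0,2\pi)$ --- which, since $\int f = 0$ forces $f$ to oscillate in sign, may be taken to be genuine sign changes --- set
$$p(x) \;=\; \prod_{j=1}^{2n+2}\sin\!\left(\frac{x-x_j}{2}\right).$$
Two properties must be verified. First, iterating the identity $\sin\alpha\sin\beta=\tfrac12(\cos(\alpha-\beta)-\cos(\alpha+\beta))$ across pairs of factors shows, by a short induction, that a product of $2m$ half-angle sines is a real trigonometric polynomial of degree at most $m$; hence $p \in V_{n+1}$. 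Second, each factor $\sin((x-x_j)/2)$ has a unique sign change per period, at $x_j$, so $p$ changes sign precisely at $x_1,\ldots,x_{2n+2}$ and has constant sign on each arc between consecutive roots.

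Since $p$ and $f$ share the same sign-change pattern, they alternate sign on the same arcs, and after possibly replacing $p$ by $-p$ we may arrange $pf\ge 0$ throughout $\mathbb{T}$. The product $pf$ is continuous, pointwise non-negative, and not identically zero (both $p$ and $f$ take a definite matching sign on the interior of every arc), so $\int_{\mathbb{T}} pf\,dx > 0$, contradicting $\int_{\mathbb{T}} pf = 0$ under the standing assumption. The only step that I expect to require any real care is the degree bound for the product of half-angle sines, a short induction via the product-to-sum identity; once the test function is in hand, the rest of the argument is forced by its construction.
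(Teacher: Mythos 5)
Your argument is correct in spirit but takes a genuinely different route from the paper's. The paper proves the lemma with a complex-analytic, topological argument: under the contradiction hypothesis that $f$ is orthogonal to all trigonometric polynomials of degree at most $n+1$, the Poisson extension of $f+iHf$ is holomorphic in $\mathbb{D}$ with a zero of order $\geq n+2$ at the origin, so the argument principle forces the boundary curve $f(e^{it})+iHf(e^{it})$ to wind at least $n+2$ times around $0$, and hence its real part $f$ to change sign at least $2n+4$ times. Your proof is the classical positivity argument from Sturm--Liouville / Chebyshev-system theory: construct a test polynomial $p$ of degree at most $n+1$ whose sign pattern matches that of $f$, so that $pf\geq 0$, $pf\not\equiv 0$, and $\int_{\mathbb{T}} pf>0$ contradicts orthogonality. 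The paper's route is more compact and makes the topological content visible (it is what the paper alludes to as the ``topological component''); yours is entirely elementary, avoids the Hilbert transform and complex analysis, and makes the connection to Chebyshev systems explicit. One point worth stating explicitly in a write-up: the individual factors $\sin\!\big((x-x_j)/2\big)$ are $4\pi$-periodic and not functions on $\mathbb{T}$, but a product of an \emph{even} number of them is $2\pi$-periodic, so $p$ is well defined on the circle.

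There is, however, a genuine gap in your justification that all $2n+2$ roots can be taken to be sign changes. You argue this from $\int_{\mathbb{T}} f = 0$, but orthogonality to constants only forces $f$ to have \emph{at least two} sign changes; nothing prevents some of the remaining roots from being touches where $f$ does not change sign. At such a root your $p$ changes sign while $f$ does not, and $pf$ is no longer of one sign, so the final inequality $\int pf>0$ fails. The repair is routine and actually simplifies the argument: let $x_1<\cdots<x_m$ be precisely the sign changes of $f$ (necessarily $m$ is even, and $m\leq 2n+2$ since they are among the $2n+2$ roots), and take $p(x)=\prod_{j=1}^{m}\sin\!\big((x-x_j)/2\big)$, a trigonometric polynomial of degree $\leq m/2\leq n+1$ that changes sign exactly where $f$ does. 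Then $pf$ is single-signed and not identically zero (as $f$ has only finitely many roots), and the contradiction with $f\perp V_{n+1}$ follows as you intended.
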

\begin{proof}
We assume the statement fails and start by constructing the conjugate function $Hf$ via the Hilbert transform: if
$$ f(x) = \sum_{k \geq n+2}{a_k \sin{kx} + b_k \cos{kx}}, \quad \mbox{then} \quad  Hf(x) = \sum_{k \geq n+2}{b_k \sin{kx} - a_k \cos{kx}}.$$
The Poisson extension of $f + i Hf$ is holomorphic inside the unit disk. This function has a root of order at least $n+2$ in the origin, which means
that the total number of roots inside the unit disk is at least $n+2$. However, this is also the winding number of $f(e^{it}) + i H f(e^{it})$
around the origin, which implies the existence of at least $2n+4$ real roots of $f$, which is a contradiction.
\end{proof}

The statement is clearly sharp  --  an equivalent way to state it is that any continuous function $f \in C(\mathbb{T})$ in
the orthogonal complement of $\left\{1, \sin{x}, \cos{x}, \sin{2x}, \dots, \cos{nx}\right\}$ has at least $2n + 2$ roots. There is no straightforward way to make it quantitative: if positive and negative $L^1-$mass are close to each
other, then the Poisson extension induces a lot of cancellation, which can make the function very small in the interior. Nonetheless,
assuming some degree of regularity (here: $f$ being bounded) allows to control the extent to which cancellation occurs.

\begin{theorem} \label{thm:fourier} Let $f:\mathbb{T} \rightarrow \mathbb{R}$ be a continuous function having $n$ sign changes. There exists a universal constant $c_n > 0$ such that
$$ \sum_{k=0}^{n/2}{  \left| \left\langle f, \sin{kx} \right\rangle \right| +    \left| \left\langle f, \cos{kx} \right\rangle \right|  }   \geq c_n  \frac{ | f\|^{n+1}_{L^1(\mathbb{T})}}{  \| f\|^{n }_{L^{\infty}(\mathbb{T})}}.$$
\end{theorem}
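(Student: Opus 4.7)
The plan is to use Fourier duality against a carefully chosen real trigonometric polynomial $p$ of degree $n/2$ whose sign matches that of $f$. On one hand, $\int fp = \int |f||p|$ will be bounded below by a neighborhood argument exploiting the explicit product structure of $p$; on the other hand $|\int fp|$ will be bounded above by $S$ (the sum on the left of the inequality) times the $\ell^\infty$ norm of the Fourier coefficients of $p$, which is controlled by $\|p\|_{L^\infty}$.

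For the construction, I would let $x_1 < \dots < x_n$ denote the sign changes of $f$ (note that $n$ is automatically even since $f$ is continuous on the circle) and set
$$ p(x) = \varepsilon \prod_{j=1}^{n} \sin\bigl((x-x_j)/2\bigr), $$
with $\varepsilon \in \{\pm 1\}$ chosen so that $\operatorname{sign}(p) = \operatorname{sign}(f)$; this is possible because each factor contributes exactly one sign change (at $x_j$) on $\mathbb{T}$. Since $n$ is even, $p$ is an honest real trigonometric polynomial of degree $n/2$, and I would normalize it so that $\|p\|_{L^\infty}=1$. By Parseval, the fact that $\hat p$ is supported in $\{|k|\leq n/2\}$, and the trivial bound $\|\hat p\|_{\ell^\infty}\leq (2\pi)^{-1}\|p\|_{L^1}\leq \|p\|_{L^\infty}=1$,
$$ \left|\int_{\mathbb{T}}f p\, dx\right| \leq 2\pi\,\|\hat p\|_{\ell^\infty}\sum_{|k|\leq n/2}|\hat f(k)| \lesssim S. $$

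For the matching lower bound, set $L=\|f\|_{L^1}$, $M=\|f\|_{L^\infty}$, $\delta=L/(4nM)$, and let $B=\bigcup_{j}(x_j-\delta,\,x_j+\delta)$. Then $|B|\leq 2n\delta=L/(2M)$ gives $\int_B |f|\leq L/2$ and hence $\int_{B^c}|f|\geq L/2$. Meanwhile, on $B^c$ every factor satisfies $|\sin((x-x_j)/2)|\geq \delta/\pi$, so $|p(x)|\geq (\delta/\pi)^n$ on $B^c$ (the normalization $\|p\|_{L^\infty}\leq 1$ only helps, since we are dividing by a number at most one). This yields
$$ \int fp = \int |f||p| \geq (\delta/\pi)^n \cdot L/2 \gtrsim_n \frac{L^{n+1}}{M^n}, $$
which, combined with the upper bound, proves the theorem with a constant $c_n$ of order $n^{-n}$ — far more than the qualitative topological lemma demands. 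I do not foresee a serious obstacle in this scheme; the only delicate points are the Fourier-side arithmetic $\|\hat p\|_{\ell^\infty}\leq \|p\|_{L^\infty}$ and the verification that the naive product-of-sines construction gives a trigonometric polynomial of integer degree $n/2$, which works precisely because $n$ is even for continuous functions on the circle.
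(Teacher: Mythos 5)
Your proof is correct, and it takes a genuinely different route from the paper's. The paper proceeds by contradiction and compactness: it assumes a normalized minimizing sequence $f_\ell$ exists, mollifies with the heat kernel $\theta_t$ (whose crucial property is that it cannot increase the number of sign changes), extracts limits, and derives a contradiction from the topological Lemma~\ref{lemma:top}. That argument is explicitly nonconstructive --- the paper itself notes there is ``no control over the constant $c_n$.'' Your argument is a direct duality estimate: you build a trigonometric polynomial $p$ of degree exactly $n/2$ whose sign pattern matches that of $f$, so $\int fp = \int |f|\,|p|$, and you then play the Fourier-side upper bound $\bigl|\int fp\bigr| \lesssim \|\widehat{p}\|_{\ell^\infty}\sum_{|k|\le n/2}|\widehat{f}(k)|$ against a physical-side lower bound obtained by discarding a $\delta$-neighborhood of the sign changes. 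All the nontrivial points check out: since $n$ is even (as it must be for a continuous function on $\mathbb{T}$), the product $\prod_j \sin\bigl((x-x_j)/2\bigr)$ is $2\pi$-periodic and its exponential expansion has only integer frequencies in $[-n/2,n/2]$; each factor has a single simple zero on $\mathbb{T}$ at $x_j$, so the alternating sign pattern of $p$ can be made to match that of $f$ by a global sign $\varepsilon$; on the circle $|\sin((x-x_j)/2)| = \sin(d(x,x_j)/2)\ge \delta/\pi$ when $d(x,x_j)\ge\delta$; and the set $B$ of $\delta$-neighborhoods carries at most half the $L^1$-mass by the trivial bound $\int_B|f|\le \|f\|_\infty |B|$. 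The payoff of your approach is substantial: you sidestep both the heat semigroup and the topological lemma, and you obtain an explicit $c_n$ of order $n^{-n}$. This is essentially a quantitative Chebyshev-type duality argument, and it is arguably the more natural proof of the theorem as stated; what it does not deliver is the refined structural information the paper extracts along the way (e.g.\ that near-extremizers concentrate mass near clustered roots and resemble finite-difference stencils), which the compactness argument makes visible.
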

In particular, multiple roots without a sign change do not affect the scaling.
The proof is using compactness and Lemma \ref{lemma:top} in an essential way: in particular, there is no control over the constant $c_n$. 
Using the Poisson Kernel immediately allows to deduce a completely equivalent formulation of the result for harmonic functions, which is the result we will use in the proof of Theorem 1.
\begin{corollary} Let $u:\mathbb{D} \rightarrow \mathbb{R}$ be harmonic, let $u\big|_{\partial \mathbb{D}}$ be continuous with $n$ sign changes.
Then, for some universal $c_n > 0$,
$$ \sum_{k=0}^{n/2}{  \left| \frac{\partial^k u}{\partial x^k}(0,0)  \right| +   \left| \frac{\partial^k u}{\partial y^k}(0,0) \right|  }   \geq c_n  \frac{ \| u\big|_{\partial \mathbb{D}}\|^{n+1}_{L^1(\mathbb{\partial \mathbb{D}})}}{  \| u\big|_{\partial \mathbb{D}}\|^{n }_{L^{\infty}(\partial \mathbb{D})}}.$$
\end{corollary}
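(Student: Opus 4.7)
The strategy is to transport Theorem \ref{thm:fourier} from the boundary circle into the disk via the Poisson extension. First I would set $f(t) := u(\cos t, \sin t)$, noting that $f \in C(\mathbb{T})$ still has $n$ sign changes and, because the circle is parametrised by arclength, the norms $\|f\|_{L^p(\mathbb{T})}$ agree with the boundary norms of $u$ that appear on the right-hand side for $p \in \{1, \infty\}$. Theorem \ref{thm:fourier} applied to $f$ then yields
$$\sum_{k=0}^{n/2}\bigl(|\langle f, \cos kt\rangle| + |\langle f, \sin kt\rangle|\bigr) \;\geq\; c_n \frac{\|f\|_{L^1(\mathbb{T})}^{n+1}}{\|f\|_{L^\infty(\mathbb{T})}^{n}},$$
so the remaining task is purely bookkeeping: to identify each Fourier coefficient on the left with a derivative of $u$ at the origin.

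For this I would invoke the standard Poisson expansion
$$u(r\cos\theta, r\sin\theta) = \frac{a_0}{2} + \sum_{k \geq 1} r^k (a_k \cos k\theta + b_k \sin k\theta), \qquad \pi a_k = \langle f, \cos kt\rangle,\ \ \pi b_k = \langle f, \sin kt\rangle,$$
and write $u = \operatorname{Re} g$ for the holomorphic $g(z) = \tfrac{a_0}{2} + \sum_{k \geq 1}(a_k - i b_k) z^k$ with $z = x + iy$. Differentiating at the origin gives
$$\frac{\partial^k u}{\partial x^k}(0,0) = k!\, a_k, \qquad \frac{\partial^{k-1}}{\partial x^{k-1}}\frac{\partial u}{\partial y}(0,0) = k!\, b_k,$$
so both $|a_k|$ and $|b_k|$ appear, up to $k!$ and $\pi$ factors that are absorbed into a new $n$-dependent constant, as magnitudes of derivatives of $u$ at the origin. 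Combining this with the previous display produces the desired inequality.

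The step I expect to cause the most trouble is matching this identification with the precise form of the left-hand side in the Corollary. Harmonicity forces $\partial_y^{2m} u = (-1)^m \partial_x^{2m} u$, so pure $y$-derivatives of even order are proportional to pure $x$-derivatives and on their own cannot detect the sine coefficients $b_{2m}$; the example $u(x,y) = xy$, for which every pure partial derivative vanishes at the origin while the boundary trace $\tfrac{1}{2}\sin 2t$ is nonzero, makes this transparent. I would therefore read the sum on the left as implicitly containing at least one mixed derivative of each order (for instance $\partial_x^{k-1}\partial_y u(0,0)$ alongside $\partial_x^k u(0,0)$), after which the argument above closes cleanly. Without that reading one would have to supply an auxiliary step such as testing against a rotated copy of $u$ and taking the worst rotation to recover the missing coefficient.
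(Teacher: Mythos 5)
Your derivation follows the paper's intended route exactly — the paper gives no explicit argument for this corollary beyond "Using the Poisson Kernel immediately allows to deduce a completely equivalent formulation," and what you have written out (Poisson expansion, $u = \operatorname{Re} g$ with $g(z)=\frac{a_0}{2}+\sum (a_k-ib_k)z^k$, identify $\pi a_k$ and $\pi b_k$ with Fourier pairings, compare derivatives at the origin to these coefficients) is precisely the bookkeeping the paper is waving at.

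The issue you flag in the last paragraph is real, not a matter of interpretation, and you have nailed the counterexample. Pure $x$- and $y$-derivatives recover only $a_k$ when $k$ is even: from $\partial_y z=i$ one gets $\partial_y^k u(0,0)=k!\left(a_k\operatorname{Re}(i^k)+b_k\operatorname{Im}(i^k)\right)$, which equals $\pm k!\,a_k$ for even $k$ and $\pm k!\,b_k$ for odd $k$, so $b_2, b_4,\dots$ never appear on the left as literally written. The function $u(x,y)=xy$ with boundary trace $\tfrac12\sin 2t$ (four sign changes, so $n=4$) then has a vanishing left-hand side but a positive right-hand side, so the corollary as printed fails. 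Your proposed repair — read the left side as including a mixed derivative $\partial_x^{k-1}\partial_y u(0,0)$, or equivalently allow a worst-case direction $\nu$ and bound $\max_\nu|\partial_\nu^k u(0,0)|$ — is the correct one, and it is in fact the version the paper silently uses: in the proof of Theorem~1 the corollary is invoked to conclude that $\left|\partial_\nu^2\left[(v-u)\circ\phi\right](0,0)\right|$ is bounded below \emph{for some direction $\nu$}, not just $\nu\in\{e_1,e_2\}$, which only follows from the repaired statement. So your proof is correct modulo this repair, and you have identified a genuine oversight in the corollary's statement rather than a gap in your own argument.
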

Here, we understand the $0-$th derivative as point evaluation. The statement tells us that if a harmonic function is very flat around the origin, then it either has large amplitude or a large number of roots on the boundary  -- our proof of Theorem 1 requires the statement only for $n=4$.

\subsection{Decay of the heat equation.} Another  interesting application of the result is in the study of lower bounds on the decay of the heat equation on $\mathbb{T}$ for continuous initial datum $f:\mathbb{T} \rightarrow \mathbb{R}$.
The solution of the heat equation $e^{t \Delta} f$ at time $t=1$ is given by the convolution with the heat kernel $\theta_1$, where $\theta_t$ is the Jacobi theta function (see the proof for more details).
 If we take $f(x) = \sin{kx}$, then it is easy to see that
$$ \| \theta_1 *f  \|_{L^2(\mathbb{T})}  \sim e^{-k^2} \qquad \mbox{and} \qquad  \|f\|_{L^2(\mathbb{R})} \sim 1.$$
Clearly, the question is how the $L^2-$norm of $f$ is distributed in the Fourier spectrum and it is not terribly difficult to connect the notion of oscillation with that of decay of the heat equation. A rather easy result, similar to results in \cite{rima, roy} but much simpler, is the following.

\begin{proposition} There exists a universal $c>0$ such that for all differentiable $f:\mathbb{T} \rightarrow \mathbb{R}$ 
$$ \| \theta_1 *f  \|_{L^2(\mathbb{T})}  \gtrsim \exp\left( -c \frac{\|f'\|^4_{L^1(\mathbb{T})}}{\|f\|^4_{L^2(\mathbb{T})}} \right) \|f\|_{L^2(\mathbb{T})}$$
and
$$ \| \theta_1 *f  \|_{L^2(\mathbb{T})}  \gtrsim \exp\left( -c \frac{\|f'\|^2_{L^2(\mathbb{T})}}{\|f\|^2_{L^2(\mathbb{T})}} \right) \|f\|_{L^2(\mathbb{T})}.$$
\end{proposition}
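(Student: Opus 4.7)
The plan is to pass to the Fourier side, split the spectrum at a threshold $N$, and balance two competing effects: the heat factor $e^{-2N^2}$ decays rapidly in $N$, while the low-frequency mass $\sum_{|k|\le N}|\hat f(k)|^2$ must remain a definite fraction of $\|f\|_{L^2}^2$. The two inequalities will then follow by using two different tail estimates.

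First I would record that convolution with $\theta_1$ acts diagonally in Fourier and multiplies the $k$-th coefficient by $e^{-k^2}$ (up to a normalization that I will not track), so that for every $N \ge 0$,
\[
\|\theta_1 * f\|_{L^2(\mathbb{T})}^2 \;\ge\; e^{-2N^2} \sum_{|k|\le N} |\hat f(k)|^2.
\]
It therefore suffices to bound the tail $\sum_{|k|>N}|\hat f(k)|^2$ by $\tfrac{1}{2}\|f\|_{L^2(\mathbb{T})}^2$, for then the low-frequency part retains at least half of the total $L^2$ mass and the exponential prefactor delivers the announced decay after optimizing in $N$.

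For the second estimate I would use Plancherel on $f'$, namely $\sum_{k} k^2|\hat f(k)|^2 = \|f'\|_{L^2(\mathbb{T})}^2$, which yields $\sum_{|k|>N}|\hat f(k)|^2 \le N^{-2}\|f'\|_{L^2(\mathbb{T})}^2$. Choosing $N \sim \|f'\|_{L^2}/\|f\|_{L^2}$ to make this bound at most $\tfrac{1}{2}\|f\|_{L^2}^2$ produces precisely the exponent $N^2 \sim \|f'\|_{L^2}^2/\|f\|_{L^2}^2$ required in the statement. For the first estimate, the tail is instead controlled via the Hausdorff--Young-type bound $|\widehat{f'}(k)|\le \|f'\|_{L^1}$, which combined with $\widehat{f'}(k)=ik\hat f(k)$ gives $|\hat f(k)|\le \|f'\|_{L^1}/|k|$ for $k \neq 0$ and hence
\[
\sum_{|k|>N}|\hat f(k)|^2 \;\lesssim\; \|f'\|_{L^1(\mathbb{T})}^2 \sum_{|k|>N}\frac{1}{k^2} \;\lesssim\; \frac{\|f'\|_{L^1(\mathbb{T})}^2}{N}.
\]
The tail now decays only like $1/N$ instead of $1/N^2$, which forces the larger threshold $N \sim \|f'\|_{L^1}^2/\|f\|_{L^2}^2$; squaring produces the fourth-power exponent $\|f'\|_{L^1}^4/\|f\|_{L^2}^4$ appearing in the exponential.

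I do not anticipate any real obstacle. The only mild annoyance is the bookkeeping of normalization constants in the Fourier coefficients and in $\theta_1$, which will affect the numerical value of the universal constant $c$ but not the qualitative shape of either inequality.
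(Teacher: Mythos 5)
Your proposal is correct and matches the paper's proof essentially line for line: both use the same frequency-threshold decomposition, the same tail bounds ($|\hat f(k)|\lesssim \|f'\|_{L^1}/|k|$ summed to give a $1/N$ tail for the first inequality, and Plancherel on $f'$ to give a $1/N^2$ tail for the second), and the same optimization of $N$ against the heat factor. No meaningful difference.
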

We observe that the second estimate is sharp for $\sin{(kx)}$. One particular consequence of Theorem \ref{thm:fourier} is sharp control depending on the number of roots: as one
would expect, rapid decay of the solution of the heat equation requires the presence of oscillations.
\begin{corollary} Suppose $f:\mathbb{T} \rightarrow \mathbb{R}$ is continuous and changes sign $n$ times. Then, for fixed $t>0$,
$$ \| \theta_t *f  \|_{L^{\infty}(\mathbb{T})}  \gtrsim_{n,t} \frac{ | f\|^{n+1}_{L^1(\mathbb{T})}}{  \| f\|^{n}_{L^{\infty}(\mathbb{T})}}.$$
\end{corollary}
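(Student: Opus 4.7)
The plan is to combine Theorem \ref{thm:fourier} with the fact that the heat semigroup on $\mathbb{T}$ is diagonal in the Fourier basis, so low-frequency information is preserved (up to a bounded multiplicative loss depending only on $n$ and $t$).

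Specifically, writing $f \sim \sum_k (a_k \cos kx + b_k \sin kx)/\pi$, the heat kernel acts by
\begin{equation*}
\widehat{\theta_t \ast f}(k) \;=\; e^{-k^2 t}\,\widehat{f}(k),
\end{equation*}
so each Fourier coefficient $\langle f,\cos kx\rangle$ or $\langle f,\sin kx\rangle$ is merely multiplied by $e^{-k^2 t}$. In particular, for every $0\le k \le n/2$, the coefficients of $\theta_t\ast f$ are at least $e^{-(n/2)^2 t}$ times the corresponding coefficients of $f$.

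Next, by Theorem \ref{thm:fourier}, the low-frequency Fourier mass of $f$ is bounded below:
\begin{equation*}
\sum_{k=0}^{n/2} \bigl(|\langle f,\sin kx\rangle| + |\langle f,\cos kx\rangle|\bigr) \;\geq\; c_n \,\frac{\|f\|_{L^1(\mathbb{T})}^{n+1}}{\|f\|_{L^\infty(\mathbb{T})}^{n}}.
\end{equation*}
Since there are only $O(n)$ terms in the sum, at least one term is at least $\gtrsim_n \|f\|_{L^1}^{n+1}/\|f\|_{L^\infty}^n$. Call that term $\langle f, e_k\rangle$ with $k\le n/2$ and $e_k \in \{\sin kx,\cos kx\}$. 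Then the corresponding Fourier coefficient of $\theta_t \ast f$ satisfies
\begin{equation*}
|\langle \theta_t \ast f,\, e_k\rangle| \;\geq\; e^{-(n/2)^2 t}\,|\langle f, e_k\rangle| \;\gtrsim_{n,t}\; \frac{\|f\|_{L^1(\mathbb{T})}^{n+1}}{\|f\|_{L^\infty(\mathbb{T})}^{n}}.
\end{equation*}

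Finally, to convert a lower bound on a single Fourier coefficient into a lower bound on the $L^\infty$ norm, use the trivial estimate $|\langle g, e_k\rangle| \leq \|e_k\|_{L^1}\,\|g\|_{L^\infty} \leq 2\pi \|g\|_{L^\infty}$, applied to $g = \theta_t \ast f$. This yields
\begin{equation*}
\|\theta_t \ast f\|_{L^\infty(\mathbb{T})} \;\gtrsim_{n,t}\; \frac{\|f\|_{L^1(\mathbb{T})}^{n+1}}{\|f\|_{L^\infty(\mathbb{T})}^{n}},
\end{equation*}
as required. There is no serious obstacle: the heart of the argument is Theorem \ref{thm:fourier}, and the only work here is observing that the first $n/2$ Fourier modes survive the heat semigroup with loss only $e^{-(n/2)^2 t}$, which is absorbed into the implicit constant since both $n$ and $t$ are fixed. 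The sharpness hinted at by the introduction (decay no faster than $\exp(-(\#\text{sign changes})^2 t/4)$) is exactly the exponential factor $e^{-(n/2)^2 t}$ appearing in this reduction.
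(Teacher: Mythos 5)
Your argument is correct as a derivation \emph{from} Theorem~\ref{thm:fourier}, and is in fact exactly the ``immediate'' implication the paper itself acknowledges when it says ``The Corollary follows immediately from Theorem~2.'' However, the paper deliberately reverses the logical order: it first proves Corollary~4 \emph{directly} by a compactness argument, and then derives Theorem~\ref{thm:fourier} as a consequence of Corollary~4. Within the paper's architecture, your proof would therefore be circular. The direct proof of Corollary~4 in the paper proceeds by contradiction: normalize $\|f_k\|_{L^1(\mathbb{T})}=1$ with at most $n$ sign changes, assume $\|\theta_t * f_k\|_{L^\infty}\,\|f_k\|_{L^\infty}^n \to 0$; show that all $L^1$-mass must concentrate near the roots of $f_k$ (otherwise the heat-smoothed functions would have uniformly bounded-below $L^2$ mass at low frequencies, by a Poincar\'e-type argument, contradicting decay); pass to a subsequence where roots converge; Taylor-expand translated heat kernels $\theta_s(\cdot-\beta)$ near the concentration point; and invoke Lemma~4 (the linear-algebra lemma that a function with $n$ sign changes cannot annihilate all polynomials of degree $n-1$) to conclude that at least one of the first $n$ derivative contributions must survive, giving the lower bound at scale $\sim\|f_k\|_{L^\infty}^{-n}$. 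The author states this reversal is chosen because the compactness proof of the Corollary is cleaner than a direct proof of Theorem~\ref{thm:fourier}, which otherwise involves delicately tracking the vanishing of a finite sum of Fourier coefficients under the limit. Your approach buys brevity but requires an independent proof of Theorem~\ref{thm:fourier}; the paper's approach is self-contained, with Corollary~4 doing the hard work. Both are mathematically valid; the difference is one of dependency order and organization, which matters for the paper's overall logical structure.
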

We observe that the implicit constant decays at least like $e^{-n^2 t}$. The Corollary follows immediately from Theorem 2, however, we find it easier to
reverse the arguments, establish Corollary 4 directly and then derive Theorem 2 as a consequence.

\subsection{Outline of the paper.} We discuss several open problems in Section \S 3. The proof of Corollary 4 is given in Section \S 4 and will then imply Theorem 2. Theorem 1 is established in Section \S 5, which also contains a series of explicit computations for the rectangle that show Theorem
1 to be sharp. Section \S 6 gives the proofs of Proposition 1 and 2.

\section{Open problems} 

\textit{1. Convexity of the Domain.} Does Theorem 1 also hold true on domains that are not convex but merely simply connected or perhaps only bounded? The proof uses convexity of the domain $\Omega$ in a very essential way and it is not clear to us whether the statement remains valid in other settings.\\

\textit{2. General elliptic equations.} The question how level sets of elliptic equations behave close to the maximum seems to be of great
intrinsic interest. It would be desirable to have analogous statements for other elliptic equations; one might be inclined to believe that the torsion function behaves in a rather atypical manner and that most
elliptic equations cannot have very eccentric level sets close to their maximum. Consider, for example, the first Laplacian eigenfunction
\begin{align*}
 -\Delta \phi_1 &= \lambda_1 \phi_1 ~~\quad \mbox{in}~\Omega  \\
u &= 0 \qquad \quad \mbox{on} ~ \partial \Omega
\end{align*}
on a rectangle $\Omega = [-D,D] \times [-1,1]$ given by
$$ \phi_1 = \cos{\left( \frac{2}{D \pi} x\right)} \cos{\left(\frac{\pi}{2}x\right)} \qquad \mbox{satisfying} \qquad D^2u(0,0) = \begin{pmatrix} -\frac{4}{\pi^2} D^{-2} & 0 \\ 0 & -\frac{\pi^2}{4} \end{pmatrix}.$$
This suggests $\diam(\Omega)^2\inrad(\Omega))^{-2}$ as the natural candidate for the largest possible eccentricity of level set close to the maximum for this particular equation.  \\

\textit{3. Estimates for Harmonic Functions and Fourier series.} A entirely different family of problems arises naturally from Lemma 1 (and Corollary 3). Let $\mathbb{B}^n$ be the unit ball in $\mathbb{R}^n$ and let $u:\mathbb{B}^n \rightarrow \mathbb{R}$ be harmonic. Is there
an upper bound of the form
$$ \left[\mbox{order of vanishing of}~u~\mbox{in}~0\right]  \lesssim  \mathcal{H}^{n-2}\left( \left\{\|x\| = 1/2: f(x) = 0\right\} \right)^{\alpha},$$
where $\alpha > 0$ is a constant possibly depending on the dimension and $\mathcal{H}^{n-2}$ is the $(n-2)-$dimensional Hausdorff measure? Lemma
1 establishes the result for $n=2$ and $\alpha = 1$ but the topological aspect of our proof does not seem to generalize. It would, of course, also be of great
interest to have versions of Theorem 2 in a more general context and this, too, seems challenging.\\

\section{Proof of Theorem 2}
We start by establishing Theorem 2 since we require it to prove Theorem 1.
One crucial ingredient of the proof will be the heat equation, which is completely explicit: we let
  $$ \theta_t(x) = \frac{1}{2\pi} + \frac{1}{\pi} \sum_{n=1}^{\infty}{e^{- n^2 t}\cos{( n x)}} \qquad \mbox{denote the Jacobi $\theta-$function}$$
 and observe $\theta_t(x) \geq 0$, the symmetry  $\theta_t(x)=\theta_t(2\pi-x)$, 
  $$ \int_{0}^{2\pi}{\theta_t(x)dx} = 1 \qquad \mbox{and} \qquad   \theta_{t}(x) \sim \begin{cases} 1/\sqrt{t} \qquad &\mbox{for}~ |x| \lesssim \sqrt{t} \\ 0 \qquad &\mbox{otherwise.} \end{cases}$$
We also remark that
$$ \left(\frac{\partial}{\partial t} - \Delta\right) (\theta_t * f) = 0,$$
i.e. that $\theta_t * f$ gives the solution of the heat equation with initial datum $f$.  In particular, $\theta_t$ can be interpreted as a Fourier multiplier which implies that it
preserves orthogonality to subspaces. Another crucial property is that for continuous $f:\mathbb{T} \rightarrow \mathbb{R}$ the function $\theta_t *f$ has at most as many
sign changes as $f$. The third important property is that the rapid decay of its Fourier coefficients is a way to obtain compactness of sequences.\\

Before embarking on the proof, we note an elementary observation coming from linear algebra. Let $f: \mathbb{R} \rightarrow \mathbb{R}$ be compactly supported and suppose
that the sum of the number of connected components of $\left\{x:f(x) > 0\right\}$ and $\left\{x:f(x) < 0\right\}$ is $n$.
\begin{lemma} There exists a polynomial of degree $n-1$ such that
$$  \int_{\mathbb{R}}{p(x) f(x) dx}  \neq 0.$$
\end{lemma}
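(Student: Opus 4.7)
My plan is to build an explicit ``matching'' polynomial $p$ of degree $n-1$ whose sign agrees with that of $f$ wherever $f \ne 0$, so that $pf \geq 0$ pointwise with strict positivity on a set of positive measure; the conclusion $\int pf > 0$ will then be immediate.

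First I would enumerate the $n$ sign components of $f$ in the order they appear along the real line. Since the connected components of $\{f>0\}$ and of $\{f<0\}$ are disjoint intervals, I can list them as $C_1 < C_2 < \cdots < C_n$ with signs $\epsilon_1,\ldots,\epsilon_n \in \{\pm 1\}$. Between any two consecutive intervals $C_i$ and $C_{i+1}$ there is a nonempty open gap, from which I pick a separator $y_i \in (\sup C_i, \inf C_{i+1})$.

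Next, I would set
$$ p(x) \;=\; \sigma \prod_{i\,:\,\epsilon_i \,\ne\, \epsilon_{i+1}} (x - y_i), $$
with $\sigma \in \{\pm 1\}$ chosen so that $p$ carries the sign $\epsilon_1$ on $C_1$. Because $p$ changes sign exactly at the separators $y_i$ indexed by sign changes of $f$, a single induction on $j$ shows that $p$ carries the sign $\epsilon_j$ on each $C_j$. Hence $pf \geq 0$ on all of $\mathbb{R}$, with strict inequality on the positive-measure set $\bigcup_j C_j$, which yields $\int pf > 0$.

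Finally, the degree of $p$ equals the number of sign alternations of $f$ along the real line, which is at most $n-1$. Any deficit is made up by multiplying $p$ by linear factors $(x - z_k)$ with each $z_k$ chosen outside $\mathrm{supp}(f)$: such factors have constant sign on $\mathrm{supp}(f)$ and can be absorbed into $\sigma$, so the alignment of signs is preserved. The only delicate step in this plan is the bookkeeping for degree and sign, and no analytic machinery beyond the intermediate value theorem is needed; I expect this to be the least laborious of the auxiliary results in the proof of Theorem 2.
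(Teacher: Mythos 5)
Your construction is correct, but it takes a genuinely different route from the paper. You build an explicit ``sign-matching'' polynomial $p$ so that $pf \geq 0$ pointwise with strict positivity on the components, making $\int pf > 0$ immediate. The paper instead argues indirectly: it considers the linear map $T:V\to\mathbb{R}^n$ sending a polynomial $p$ of degree $\leq n-1$ to the vector of integrals $\bigl(\int_{I_j} pf\bigr)_{j=1}^n$, observes that if the lemma failed then $T$ would land in the hyperplane $\{\sum x_j = 0\}$ and hence have nontrivial kernel, and then derives a contradiction by showing that any nonzero $p \in \ker T$ must have a root in each of the $n$ intervals, exceeding its degree. Your approach is more elementary (no dimension counting) and constructive, producing a concrete witness; the paper's is shorter to state once one is comfortable with the rank argument, and is perhaps closer in spirit to the compactness/contradiction style used in the rest of Section 4.

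Two small technical slips in your write-up are worth tidying. First, when $f$ is continuous and vanishes at a single point between two components of opposite sign, the open gap $(\sup C_i,\inf C_{i+1})$ can be empty; one should then simply take $y_i = \sup C_i = \inf C_{i+1}$, and the argument goes through unchanged since $(x-y_i)$ still separates the two components. Second, to pad the degree up to exactly $n-1$ you should choose the extra roots $z_k$ strictly to the left (or right) of all of $\mathrm{supp}(f)$, not merely ``outside $\mathrm{supp}(f)$'': a $z_k$ sitting in a gap between components would lie outside the support but would not give $(x-z_k)$ constant sign on the support. Neither slip affects the substance of the argument.
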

\begin{proof} Let $V$ denote the $n-$dimensional vector space of polynomials of degree $n-1$. We denote the connected components of $\left\{x:f(x) > 0\right\}$ and $\left\{x:f(x) < 0\right\}$ by $I_1, I_2, \dots, I_n$
and construct a linear map $T:V \rightarrow \mathbb{R}^n$ via
$$ T(p) = \left( \int_{I_1}{p(x) f(x) dx}, \int_{I_2}{p(x) f(x) dx}, \dots, \int_{I_n}{p(x) f(x) dx} \right).$$
If the statement were to fail, then $T$ maps $V$ into the $(n-1)-$dimensional subspace
$$ \left\{(x_1, \dots, x_n) \in \mathbb{R}^N: x_1 + x_2 + \dots + x_n =0\right\}$$
and, as a consequence, $\mbox{ker}(T) \neq \emptyset$. Let $0 \neq p \in \mbox{ker}(T)$. We observe that $p$ has to have at least one root inside every interval $I_j$,
which implies that $p$ has $n$ roots and this is a contradiction.
\end{proof}

\subsection{Proof of Corollary 4.} 
\begin{proof} We want to show that, for fixed $t > 0$,
 $$ \| \theta_t *f  \|_{L^{\infty}(\mathbb{T})}  \gtrsim_{n} \frac{ | f\|^{n+1}_{L^1(\mathbb{T})}}{  \| f\|^{n}_{L^{\infty}(\mathbb{T})}}.$$
We assume the statement fails and use the scaling symmetry to extract a subsequence $f_k$ normalized to $\|f_k\|_{L^1(\mathbb{R})} = 1$ and having at most $n$ roots  such that
 $$ \| \theta_t *f_k  \|_{L^{\infty}(\mathbb{T})}  \| f_k\|^{n}_{L^{\infty}(\mathbb{T})}  \rightarrow 0.$$
The normalization $\|f_k\|_{L^1(\mathbb{R})} = 1$ implies $\|f_k\|_{L^{\infty}(\mathbb{R})} \gtrsim 1$, which immediately implies
 $$ \| \theta_t *f_k  \|_{L^{\infty}(\mathbb{T})}  \rightarrow 0.$$
This condition actually implies that the term goes to 0 for all values of $t>0$, which can be seen as follows. Suppose the statement
fails and that $ \| \theta_s *f_k\|_{L^1(\mathbb{T})} \geq \delta > 0$ for infinitely many $n \in \mathbb{N}$ for some $0<s<t$. It is easily seen that
$$  \|\nabla( \theta_s *f_k)\|_{L^2(\mathbb{T})}  \lesssim_s  \| f_k\|_{L^1(\mathbb{T})} = 1.$$
Moreover, by Cauchy-Schwarz
$$  \| \theta_s *f_k\|_{L^2(\mathbb{T})}  \gtrsim \| \theta_s *f_k\|_{L^1(\mathbb{T})} \geq \delta \qquad \mbox{for infinitely many}~k.$$
Expanding into Fourier series (and using orthogonality to constants) gives that for such $k$
$$
 \delta^2 \lesssim \| \theta_s *f_k\|^2_{L^2(\mathbb{T})} = \sum_{j \in \mathbb{Z}}{ \left| \left\langle \theta_s * f_k, e^{i j x}\right\rangle\right|^2 } \leq  \sum_{j \in \mathbb{Z}}{ |j|^2  \left| \left\langle  \theta_s * f_k, e^{ijx}\right\rangle\right|^2 } =  \|\nabla (\theta_s *f_k)\|_{L^2(\mathbb{T})} \lesssim 1,
$$
which implies that at least half of the $L^2-$mass lies at frequencies $|k| \lesssim \delta^{-1}$. Then, however, using the semigroup property
\begin{align*}
 \| \theta_t *f_n\|^2_{L^2(\mathbb{T})} &=  \| \theta_{t-s} * \theta_s *f_k\|^2_{L^2(\mathbb{T})} =  \sum_{j \in \mathbb{Z} }{ e^{-j^2(t-s)} \left| \left\langle \theta_s * f_k, e^{ijx}\right\rangle\right|^2 } \\
&\geq \sum_{j \in \mathbb{Z} \atop |j| \lesssim \delta^{-1} }{ e^{-j^2(t-s)} \left| \left\langle \theta_s * f_k, e^{ijx}\right\rangle\right|^2 } \gtrsim  e^{-\delta^{-2}(t-s)} \sum_{j \in \mathbb{Z} \atop |j| \lesssim \delta^{-1} }{ \left| \left\langle \theta_s * f_k, e^{ijx}\right\rangle\right|^2 }  \\
&\gtrsim e^{-\delta^{-2}(t-s)} \| \theta_s *f_k\|^2_{L^2(\mathbb{T})}  \gtrsim e^{-\delta^{-2}(t-s)} \delta > 0.
\end{align*}
This trivially yields
$$  \| \theta_t *f_k\|^2_{L^{\infty}(\mathbb{T})}  \gtrsim e^{-\delta^{-2}(t-s)} \delta > 0,$$
which then implies
$$  \| \theta_t *f_k\|_{L^{1}(\mathbb{T})}  \gtrsim e^{-\delta^{-2}(t-s)} \delta > 0,$$
and contradicts the assumption $ \| \theta_t *f_k\|_{L^{\infty}(\mathbb{T})} \rightarrow 0$.
We now define, for any such function $f_k$ and any fixed $\delta > 0$, the set that is $\delta-$far away from the roots
$$ A_{\delta} = \left\{ x \in \mathbb{T}: \mbox{distance between}~x~\mbox{and closest root of}~f_k \geq \delta\right\}$$
and obtain that
$$ \lim_{k \rightarrow \infty}{ \int_{A_{\delta}}{|f_k|dx} } = 0$$
because otherwise one would not obtain $ \| \theta_t *f_k\|_{L^{\infty}(\mathbb{T})} \rightarrow 0$ 
(here a suitable $t$ leading to a contradiction would then be on the scale $t \sim \delta^{2+}$).
This implies that the interesting examples, in order to have rapid cancellation, need to have essentially all their $L^1-$mass close to a root. 
We now interpret the roots of $f_k$ as a sequence in $\mathbb{T}^n$. Clearly, there exists a convergent subsequence, which
we again denote by $f_k$, for which the roots also converge. There are now a variety of possible scenarios: all roots could remain separated, two of the roots could converge
to the same point while all other roots remain separated, all roots could converge on the same point or any other number of scenarios. We will only deal with the case where
all roots converge to the same point, our argument immediately extends to all other cases and also shows that all the other cases cannot yield the sharp scaling. 
We have already seen that all the $L^1-$mass has to move towards the limit point of the roots.
We observe that if 
$$ \| \theta_t *f_k  \|_{L^{\infty}(\mathbb{T})}  \| f_k\|^{n}_{L^{\infty}(\mathbb{T})}  \rightarrow 0,$$
then we also have
$$ \| \theta_{s} *f_k  \|_{L^2(\mathbb{T})}  \| f_k\|^{n}_{L^{\infty}(\mathbb{T})}  \rightarrow 0 \qquad \mbox{for all}~s \geq t.$$
 This allows us to take the entire set $\theta_s$ for, say, $t < s < 2t$
and all its translations and use the triangle inequality to conclude that for all functions
$$g \in \left\{ \sum_{j = 1}^{N}{\alpha_j \theta_{s_j}(x - \beta_j)}: \quad N \in \mathbb{N}, \alpha_j \in \mathbb{R}, \beta_j \in \mathbb{T} \right\}$$
we necessarily also have $ \| g *f_k  \|_{L^2(\mathbb{T})}  \| f_k\|^{n}_{L^{\infty}(\mathbb{T})}  \rightarrow 0$. We note that this set is an infinite-dimensional subspace
and simple expansion in Fourier series shows that for $0< s_1 < s_2 < \dots < s_k < \infty$, the functions $\theta_{s_1}, \theta_{s_2}, \dots, \theta_{s_k}$ 
are, when put together with their first $k-1$ derivative, linearly independent (in the sense of the Wronskian). $f_k *g$ being uniformly small for all $g$ means that $\left\langle f_k, g(\cdot - \beta)\right\rangle$
is uniformly small: this is only possible if there are local oscillations on a small scale that use the high degree of smoothness of $g$ to induce cancellations.
At the same time the functions $f_k$ cannot be too concentrated, the normalization $\|f_k\|_{L^1(\mathbb{R})} = 1$ forces the roots to be on scale
$\gtrsim 1/\|f_k\|_{L^{\infty}(\mathbb{R})}$. A simple Taylor expansion of $g$ in a point and Lemma 4 imply that the leading contribution will come from one of the first $n$ derivatives since
$f_{n_k}$ cannot vanish on all polynomials of degree $n$ and this implies the result: higher derivatives yield smaller contributions and by taking $g$ from the family to have suitable derivatives, we see  
 $$ \| g *f_k  \|_{L^{\infty}(\mathbb{T})} \gtrsim (\mbox{scale})^{n} = \|f_k\|^{-n}_{L^{\infty}(\mathbb{R})},$$
which gives the result.
\end{proof}

The proof, as a byproduct, allows us to construct examples showing that the result is sharp and suggests more refined statements. The construction is inspired by the second-order differential quotient
$$ g''(x) \sim \frac{g(x) - 2g(x+\varepsilon) +  g(x+2\varepsilon)}{\varepsilon^2}.$$
We define a function $f:\mathbb{T} \rightarrow \mathbb{R}$ by
$$ f(\theta) = \begin{cases} 1 \qquad &\mbox{for}~0 \leq x \leq \varepsilon \\
-2 \qquad &\mbox{for}~\varepsilon < x \leq 2\varepsilon \\
1 \qquad &\mbox{for}~2\varepsilon < x \leq 3\varepsilon \\
0 \qquad &\mbox{otherwise}.
\end{cases}$$
$f$ satisfies $\|f\|_{L^{\infty}(\mathbb{T})} = 2$,  $\|f\|_{L^1(\mathbb{T})} = 4 \varepsilon$, it changes sign exactly 2 times and has
$$ \| \theta_t * f \|_{L^{\infty}(\mathbb{T})} \sim_t \varepsilon^3   \sim \| f \|^3_{L^1(\mathbb{T})}.$$
The coefficients in more general numerical differentiation schemes immediately give a construction for the more general case.
The proof allows for improved statements: as long as not too many roots cluster in a certain area, the decay cannot be faster than 
$\| f\|^{k+1}_{L^1(\mathbb{T})} /  \| f\|^{k}_{L^{\infty}(\mathbb{T})},$ where $k$ is the maximal number of roots that cluster in a point.
Moreover, whenever these inequalities are close to being attained, then $f$ has to be close to a numerical differentiation scheme that
vanishes on the first few derivatives.

\subsection{Proof of Theorem 2}
\begin{proof} The proof proceeds by contradiction and compactness. The scaling symmetry allows us to assume that $\|f\|_{L^1(\mathbb{T})} = 1$ and it suffices to show that then
$$ \| f\|^{n}_{L^{\infty}(\mathbb{T})}  \left(  \sum_{k=0}^{n/2}{  \left| \left\langle f, \sin{kx} \right\rangle \right| +    \left| \left\langle f, \cos{kx} \right\rangle \right|  }   \right) \gtrsim_n 1 .$$
We assume now that there exists a sequence $f_{\ell}$, all elements of which are normalized $\|f_{\ell}\|_{L^1(\mathbb{T})} = 1$ and have at most $n$ roots such that
$$ \| f_{\ell} \|^{n}_{L^{\infty}(\mathbb{T})}   \left(  \sum_{k=0}^{n/2}{  \left| \left\langle f_{{\ell}}, \sin{kx} \right\rangle \right| +    \left| \left\langle f_{{\ell}}, \cos{kx} \right\rangle \right|  }   \right)   \rightarrow 0.$$
The argument proceeds in very much the same spirit as the argument above. We first argue that all the $L^1-$mass has to move towards the roots: if this were not the case, then for some sufficiently
small $t>0$ the subsequence $\theta_t * f_{\ell}$ would satisfy $\|\theta_t * f_{\ell}\|_{L^1} \gtrsim 1$ and therefore exhibit a convergent subsequence $\theta_t * f_{\ell_k} \rightarrow \phi$. The limiting function $\phi$
cannot have more than $n$ roots. However, since 
\begin{align*}
 \sum_{k=0}^{n/2}{  \left| \left\langle f_{{\ell}}, \sin{kx} \right\rangle \right| +    \left| \left\langle f_{{\ell}}, \cos{kx} \right\rangle \right|  }   &\sim_n   \left(\sum_{k=0}^{n/2}{  \left| \left\langle f_{{\ell}}, \sin{kx} \right\rangle \right|^2 +    \left| \left\langle f_{{\ell}}, \cos{kx} \right\rangle \right|^2  } \right)^{1/2} \\
&\geq \left(\sum_{k=0}^{n/2}{  \left| \left\langle \theta_t* f_{{\ell}}, \sin{kx} \right\rangle \right|^2 +    \left| \left\langle \theta_t *f_{{\ell}}, \cos{kx} \right\rangle \right|^2  } \right)^{1/2} \\
&\sim_n \sum_{k=0}^{n/2}{  \left| \left\langle \theta_t*f_{{\ell}}, \sin{kx} \right\rangle \right| +    \left| \left\langle \theta_t* f_{{\ell}}, \cos{kx} \right\rangle \right|  },
\end{align*}
we can conclude that 
$$\sum_{k=0}^{n/2}{  \left| \left\langle\phi, \sin{kx} \right\rangle \right| +    \left| \left\langle \phi, \cos{kx} \right\rangle \right|  } =0$$
which is a contradiction. Therefore, all the $L^1-$mass has to move towards the roots. As before, we consider the sequence of roots as a sequence on $\mathbb{T}^n$, extract a convergent
subsequence and focus on the largest number of roots converging in a point (with nontrivial $L^1-$mass concentrating there). The rotational invariance of Fourier subspaces allows us to
assume that this occurs in $x=0$. A simple Taylor expansion of $\sin{kx}$ and $\cos{kx}$ then recovers classical polynomials and Lemma 2 applies exactly in the same way as before.
Alternatively, we can fix $t > 0$ and consider $\theta_t * f_{\ell}$. We can implicitly define sequence $\lambda_{\ell}$ via $\| \lambda_{\ell} \theta_t * f_{\ell}\|_{L^{\infty}(\mathbb{T})} = 1$.
Corollary 4 implies that $\lambda_{\ell} \lesssim \|f_{\ell}\|_{L^{\infty}(\mathbb{T})}^n$. The contradiction then follows from extracting a convergent subsequence of $\lambda_{\ell} \theta_t * f_{\ell}$.
\end{proof}

It is easily seen that the diffusive nature of the heat equation, and the particular consequence of not increasing the number of roots, is crucial. Nonetheless, this type
of idea seems to have been used rarely, one particularly striking instance being the work of Eremenko \& Novikov \cite{erem1, erem2}. The author is grateful to Alexander
Volberg for informing him about these results.

\section{Proof of Theorem 1}

\subsection{An explicit computation for rectangles.} In this section, we quickly record the torsion function $v$ on the rectangle $[-a, a] \times [-b,b] \subset \mathbb{R}^2$. We think of $a$ as the 'long' side and $b \sim 1$ as the short side. A fairly standard way to solve the problem is to start with an ansatz 
$b^2 - y^2$ solving the equation in the interior and then adding a harmonic function with correcting boundary condition, which can be done in closed form because
the rectangular domain allows for the use of Fourier series (we refer to standard textbooks,
e.g. \cite{saad}, for details). The final expression is
$$ v(x,y) = (b^2 - y^2) - \frac{32 b^2}{\pi^3} \sum_{n \geq 1 \atop n~odd}{   \frac{(-1)^{(n-1)/2}}{ n^3 \cosh{ \frac{n \pi a}{2 b}}} \cos{\frac{n \pi y}{2b}} \cosh{\frac{n \pi x}{2b}}}.$$
It is easy to see that $\Delta v = -1$ and that $v(x, \pm b) = 0$. It remains to check the boundary conditions $v(\pm a, y)$. There, however,
$$ b^2 - y^2 = \frac{32 b^2}{\pi^3} \sum_{n \geq 1 \atop n~odd}{   \frac{(-1)^{(n-1)/2}}{ n^3} \cos{\frac{n \pi y}{2b}}} \qquad \mbox{for}~-b \leq y \leq b$$
is merely the standard Fourier series. We also compute, for future reference, 
$$ \frac{\partial^2 v}{\partial x^2}(0,0) =  - \frac{8}{\pi} \sum_{n \geq 1 \atop n~odd}{   \frac{(-1)^{(n-1)/2}}{ n \cosh{ \frac{n \pi a}{2 b}}} }.$$
We note that this is an alternating series and, using $\cosh{x} = (e^x + e^{-x})/2 \leq e^x$ for $x > 0$,
$$ \frac{\partial^2 v}{\partial x^2}(0,0) \leq - \frac{8}{\pi} \left( \frac{1}{  \cosh{ \frac{ \pi a}{2 b}}}  -  \frac{4}{  \cosh{ 2\frac{ \pi a}{2 b}}} \right) \leq - \frac{4}{\pi}  \frac{1}{  \cosh{ \frac{ \pi a}{2 b}}}  \leq -\frac{4}{\pi} \exp\left(-\frac{\pi}{2} \frac{a}{b}\right)
$$
which we can also phrase as 
$$ \frac{\partial^2 v}{\partial x^2}(0,0) \leq - \frac{4}{\pi} \exp \left(  -\frac{\pi}{2} \frac{a}{b} \right).
$$
Note that, by merely taking the first term of the alternating series, we get that
$$ \frac{\partial^2 v}{\partial x^2}(0,0) \geq - \frac{8}{\pi} \frac{1}{  \cosh{ \frac{ \pi a}{2 b}}} \geq -\frac{16}{\pi} \exp \left(  -\frac{\pi}{2} \frac{a}{b} \right),$$
which shows that our main result is sharp. We also remark that 
$$ \frac{\partial^2 v}{\partial x^2} +  \frac{\partial^2 v}{\partial y^2} = \Delta v = -1$$
and thus
$$ -1 +   \frac{1}{2\pi} \exp \left(  -\frac{\pi}{2} \frac{a}{b} \right) \leq  \frac{\partial^2 v}{\partial y^2}(0,0) \leq -1 +\frac{2}{\pi} \exp \left(  -\frac{\pi}{2} \frac{a}{b} \right).$$

\subsection{Order of Vanishing.} We first establish a simple estimate that generalizes Lemma \ref{lemma:top} to simply connected domains
in the plane. The actual argument is not substantially different and Riemann mapping would serve as an alternative way to approach the statement
by reducing it to the case of the unit disk. We consider it instructive, however, to see how the result could be approached from the perspective of partial differential
equations (in particular, no complexification occurs). We emphasize again that the result is very likely to be already known.

\begin{lemma} \label{simple} Let $\Omega \subset \mathbb{R}^2$ be a simply connected domain, let $f:\partial \Omega \rightarrow \mathbb{R}$ be continuous
and denote
\begin{align*}
 \Delta u &=0 \qquad \mbox{in}~\Omega  \\
u &= f \qquad \mbox{on}~\partial \Omega.
\end{align*}
Then, for any $x_0 \in \Omega$,
$$ \left[\mbox{order of vanishing of}~u~\mbox{in}~x_0\right] + 1 \leq  \frac{1}{2} \#\left\{y \in \partial \Omega: f(y) = 0\right\}.$$
\end{lemma}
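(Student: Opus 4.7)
The natural plan is to mirror the proof of Lemma \ref{lemma:top}, replacing the Poisson/Hilbert-transform construction on the disk by a harmonic conjugate on $\Omega$. Since $\Omega$ is simply connected, $u$ admits a single-valued harmonic conjugate $v$, so $F = u + iv$ is holomorphic in $\Omega$. Translating if necessary so that $u(x_0) = 0$ and normalizing $v(x_0) = 0$, the function $F$ vanishes at $x_0$. Writing $k$ for the order of vanishing of $u$ at $x_0$ (i.e.\ all partial derivatives of $u$ of order $\le k$ vanish at $x_0$ but some derivative of order $k+1$ does not), the Cauchy--Riemann equations force the same to hold for $v$, so the Taylor series of $F$ at $x_0$ begins with a term of degree $k+1$. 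In particular $x_0$ is a zero of $F$ of order at least $k+1$.

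The next ingredient is the argument principle. The winding number
$$\frac{1}{2\pi i}\int_{\partial \Omega} \frac{F'(z)}{F(z)}\, dz$$
of the closed curve $F|_{\partial \Omega}$ about the origin equals the number of zeros of $F$ inside $\Omega$ counted with multiplicity, which is at least $k+1$ from the zero at $x_0$ alone; any additional interior zeros only strengthen the final inequality. A continuous closed curve in $\mathbb{C}\setminus\{0\}$ with winding number $m$ must intersect the imaginary axis at least $2m$ times (its angular coordinate increases by $2\pi m$ and therefore hits each value $\pi/2 + \pi \mathbb{Z}$ at least twice per revolution), so $\operatorname{Re}(F)|_{\partial \Omega} = f$ has at least $2(k+1)$ zeros on $\partial \Omega$. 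This is exactly the claimed bound.

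The main obstacle I foresee is regularity at the boundary: the harmonic conjugate $v$ is only defined in the interior, and continuity of $v$ up to $\partial \Omega$ does not follow from continuity of $u|_{\partial\Omega}$ alone, so the boundary curve $F|_{\partial \Omega}$ and its winding number are not a priori well defined for a general simply connected $\Omega$. I would handle this by taking a smooth interior exhaustion $\Omega_\varepsilon \nearrow \Omega$, applying the winding-number argument on each $\Omega_\varepsilon$ where there is no regularity issue, and passing to the limit using the fact that a zero of $f$ on $\partial\Omega$ must be a limit of zeros of $u$ on $\partial\Omega_\varepsilon$. A cleaner shortcut is to invoke the Riemann mapping theorem to transport everything to the unit disk, where Lemma \ref{lemma:top} applies verbatim. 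The author's stated preference for a PDE proof with \emph{no complexification} presumably bypasses both issues by working directly with the local structure of the nodal set of $u$ near $x_0$: the leading harmonic polynomial has the form $r^{k+1}\cos((k+1)\theta + \phi_0)$, producing $2(k+1)$ nodal arcs emanating from $x_0$, and a topological argument (each arc must terminate on $\partial\Omega$, and distinct arcs give distinct boundary zeros) yields the same conclusion.
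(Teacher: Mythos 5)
Your proposal is correct, but the route you develop in detail is genuinely different from the paper's. You construct the harmonic conjugate $v$ on the simply connected $\Omega$, form $F = u + iv$, observe via Cauchy--Riemann that $F$ has a zero of order $\geq k+1$ at $x_0$ (where $k$ is the order of vanishing of $u$), and then use the argument principle to conclude that the boundary curve $F|_{\partial\Omega}$ has winding number $\geq k+1$ about the origin and hence crosses the imaginary axis, i.e.\ $f = \operatorname{Re} F$ vanishes, at $\geq 2(k+1)$ points. This is exactly the disk-proof of Lemma \ref{lemma:top} transplanted to $\Omega$, and the paper notes that Riemann mapping gives such a reduction; but the paper explicitly chooses a PDE-flavored alternative with ``no complexification,'' which is precisely the argument you sketch only in your final paragraph: by Bers' local expansion, $u$ near $x_0$ is modelled on the harmonic polynomial $r^{k+1}\cos((k+1)\theta + \phi_0)$, so the nodal set near $x_0$ consists of $2(k+1)$ arcs, no two of which can meet (a bounded nodal domain would force $u \equiv 0$ by the maximum principle), so each must terminate at a distinct zero of $f$ on $\partial\Omega$. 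The trade-off is roughly as you anticipate: your argument-principle route needs some care about boundary regularity of $v$ and about $F$ possibly vanishing on $\partial\Omega$ (which you correctly flag and propose to handle via interior exhaustion), while the paper's nodal-arc route avoids the conjugate entirely but leans on the slightly delicate topological assertion that each of the $2(k+1)$ local arcs reaches the boundary separately. Both approaches are sound, and you have in effect identified both.
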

\begin{proof} Assume $u$ vanishes up to order $m \in \mathbb{N}$ in $x_0 \in \Omega$. This means that the function vanishes together
with derivatives up to some derivative of order $m+1$ that does not vanish. It is classical (see e.g. Bers \cite{bers}) that locally in a neighborhood of $x_0$, i.e.
for all $x \in B(x_0, \varepsilon)$
$$ u(x) = \sum_{k=m}{a_k p_k(x-x_0)} + \mbox{l.o.t.}$$
for harmonic polynomials $p_k$ starting at order $m$ and coefficients $a_k \in \mathbb{R}$. Harmonic polynomials in the plane
are merely given by rotations of
$$ p(r,\theta) = r^k \cos{(k \theta)},$$
which restricts the structure of the nodal line: locally, the zero set is given by $2m$ curves that partition the neighborhood into
$2m$ domains. 
\begin{center}
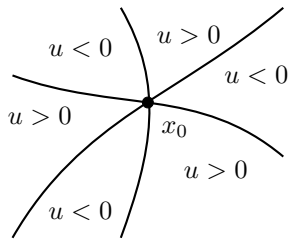
\begin{figure}[h!]
\begin{tikzpicture}[scale = 1.8]
\filldraw (0,0) circle (0.04cm);
\node at (0.2,-0.2) {$x_0$};
\draw [thick] (-1,-1)  to[out=60, in=220] (1,0.7);
\draw [thick] (-1,0.2)  to[out=340, in=140] (1,-0.4);
\draw [thick] (-0.2,0.7)  to[out=300, in=70] (-0.2,-1);
\node at (0.3, 0.5) {$u>0$};
\node at (-0.5, 0.4) {$u<0$};
\node at (-0.8, -0.1) {$u>0$};
\node at (0.8, 0.2) {$u<0$};
\node at (-0.5, -0.8) {$u<0$};
\node at (0.5, -0.5) {$u>0$};
\end{tikzpicture}
\caption{The set $\left\{x:u(x) = 0\right\}$ near $x_0$ if $u$ vanishes in $x_0$ together with gradient and Hessian but has a nonvanishing third derivative.}
\end{figure}
\end{center}
We conclude by noting that every single one of these lines has to touch the boundary because no two such lines can meet: if they did, they would enclose a domain. Since
$u$ is harmonic and vanishes on the boundary of the domain, it would be identically 0 in the inside, which is a contradiction.
on which the function $u$ is harmonic. This gives rise
to $2m$ roots of the boundary function $f$. 
\end{proof}
The usual harmonic polynomials on the disk show the result to be sharp.
We will use the following special case: if $f$ has exactly 4 roots and $u(x_0) = 0$, then $u$ can vanish simultaneously with $\nabla u$
but not all second derivatives vanish.

\subsection{Proof of Theorem 1.}

\begin{proof} Let $\Omega \subset \mathbb{R}^2$ be a bounded, planar, convex domain with diameter $\diam(\Omega)$, inradius $\inrad(\Omega)$ and assume that
\begin{align*}
 -\Delta u &=1 \qquad \mbox{in}~\Omega  \\
u &= 0 \qquad \mbox{on}~\partial \Omega
\end{align*}
has its global maximum in $x_0 \in \Omega$. The result of Makar-Limanov \cite{makar} implies that $x_0$ is unique. There is a scaling symmetry for solutions on rescaled domains $\Omega \rightarrow \lambda \Omega$
$$ u(x,y) \rightarrow \lambda^2 u\left( \frac{x}{\lambda}, \frac{y}{\lambda}\right) \qquad \mbox{for}~\lambda > 0,$$
which keeps the Hessian invariant and allows by scaling to assume that the domain satisfies $\inrad(\Omega)=1$. By rotating and translating the domain
$\Omega$, we may assume that $x_0 = (0,0)$ and that $D^2u(x_0)$ has the form
$$ D^2u(x_0) = \begin{pmatrix} \lambda_0 & 0 \\ 0 & 1 - \lambda_0 \end{pmatrix}$$
and that $|\lambda_0| \ll 1$ is the eigenvalue close to 0. The goal is now to bound this quantity from above. Note that $\inrad(\Omega) = 1$ and convexity of $\Omega$ imply $u(x_0) = u(0,0) \sim 1$ up to absolute constants. We will now consider the rectangle
$$R = [-10\diam(\Omega), 10\diam(\Omega)] \times [-a,a] \subset \mathbb{R}^2$$ where $a$ is an unknown to be determined implicitly; we also consider the torsion function on the rectangle
as
\begin{align*}
 -\Delta v &=1 \qquad \mbox{in}~R  \\
v &= 0 \qquad \mbox{on}~\partial R.
\end{align*}
Standard estimates imply that $v(0,0) \sim a^2$ as long as $a \leq \diam(\Omega)$ and that $v(0,0)$ is monotonically increasing in $a$. This allows us to implicitly define $a^*$
as the unique value for which $v(0,0) = u(0,0)$ and we henceforth use $R$ to denote the rectangle with that side length. We note that if $\Omega_1 \subsetneq \Omega_2$, then the maximum of the torsion function on $\Omega_2$ is larger than the
maximum of the torsion function on $\Omega_1$. This implies that $\Omega$ does not contain $R$ -- the convexity of $\Omega$ and the fact that $R$ is comprised of 4 straight line
segments (2 of which guaranteed to be outside of $\Omega$) poses severe geometric restrictions on the setup.
 
\begin{center}
\begin{figure}[h!]
\begin{tikzpicture}[scale = 1]
\filldraw (0,0) circle (0.04cm);
\node at (0.5,-0.3) {$x_0=(0,0)$};
\draw [ thick] (-4, 1) -- (4,1.0001);
\draw [ thick] (-4, -1) -- (4,-1.001);
\draw [ thick] (-4, -1) -- (-4,1.0001);
\draw [ thick] (4, -1) -- (4,1.0001);
\node at (4.2, -1.1) {$R$};
\node at (0, 1.2) {$\Omega \setminus R$};
\node at (0, 0.6) {$\Omega \cap R$};
\draw [thick]  (0, 0.3) ellipse (1.8cm and 1.2cm);
\end{tikzpicture}
\caption{The first case: $\Omega \setminus R$ has one connected component.}
\end{figure}
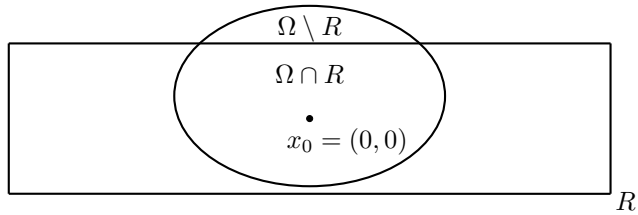
\end{center}

We start by looking at the first case, $\Omega \setminus R$ having one connected component, and show that it cannot occur: note that the function $v-u$ satisfies
\begin{align*}
 \Delta (v-u) &= 0 \qquad \mbox{on}~\Omega \cap R\\
v-u &\geq 0 \qquad \mbox{on}~\partial \left(\Omega \cap R\right)
\end{align*}
and, moreover, that $v-u>0$ on some subset of $\partial \left(\Omega \cap R\right)$. Moreover, by construction, $u(0,0) = v(0,0)$ and $\nabla (v-u)(0,0) = 0$, which implies
that there are at least 4 roots on the boundary and this is a contradiction.
We proceed to analyze the second case of $\Omega \setminus R$ having two connected components (and convexity of $\Omega$ implies that this is
an exhaustive case distinction).

\begin{center}
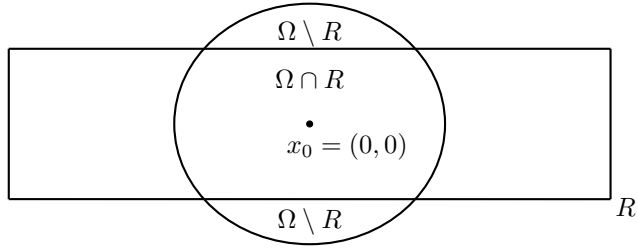
\begin{figure}[h!]
\begin{tikzpicture}[scale = 1]
\filldraw (0,0) circle (0.04cm);
\node at (0.5,-0.3) {$x_0=(0,0)$};
\draw [ thick] (-4, 1) -- (4,1.0001);
\draw [ thick] (-4, -1) -- (4,-1.001);
\draw [ thick] (-4, -1) -- (-4,1.0001);
\draw [ thick] (4, -1) -- (4,1.0001);
\node at (4.2, -1.1) {$R$};
\node at (0, 1.2) {$\Omega \setminus R$};
\node at (0, -1.3) {$\Omega \setminus R$};
\node at (0, 0.6) {$\Omega \cap R$};
\draw [thick]  (0, 0) ellipse (1.8cm and 1.6cm);
\end{tikzpicture}
\caption{The second case: $\Omega \setminus R$ has two connected components.}
\end{figure}
\end{center}

The second case can actually occur. We note again that
$$ \Delta (v-u) = 0 \qquad \mbox{on}~\Omega \cap R$$
and that $v-u$ is therefore harmonic. It has exactly 4 roots on the boundary, which implies that it may vanish in $(0,0)$ simultaneously with its derivatives but
cannot have the Hessian vanish as well. Moreover, we note that both Hessians of both functions are diagonal. 
We will now consider the Riemann mapping
$$\phi : \mathbb{D} \rightarrow \Omega \cap R$$ 
that sends the origin to the point $x_0$ in which the maximum is assumed. By construction, $(v-u) \circ \phi$ is a harmonic function in the unit disk. The
Koebe Quarter Theorem implies that $\phi: \mathbb{D} \rightarrow \Omega \cap R$ interpreted as an analytic function satisfies $|\phi'(0)| \leq 4$.
This implies that the second order derivatives of $(v-u)$ and $(v-u) \circ \phi$ are comparable in the origin. 
We have a fairly good understanding of the values of $v-u$ on the shorter sides (where it is simply given by $v$): up to small errors (that one could make precise), it
behaves roughly like $1-y^2$. In particular, this implies that
$$ \|v-u\|_{L^1(\partial \Omega)} \gtrsim 1.$$


It is therefore required to understand the amount to which distortion of these areas can happen. Classical estimates on harmonic measure then imply that
this distortion is at most exponential in $\diam/\inrad$ and this implies
$$ \|(v-u) \circ \phi\|_{L^1(\partial \mathbb{D})} \gtrsim \exp\left( - \beta \frac{\diam(\Omega)}{\inrad(\Omega)}\right) \qquad \mbox{for some universal}~\beta > 0.$$
We can now employ Corollary 3 (with $n=2$) and derive the existence of a direction $\nu$ such that
$$ \left| \frac{\partial^2}{\partial \nu^2} \left[ (v-u)\circ \phi \right] (0,0) \right| \gtrsim \frac{ \|(v-u) \circ \phi\|^5_{L^1(\partial \mathbb{D})} }{\|(v-u) \circ \phi\|^4_{L^{\infty}(\partial \mathbb{D})}}  \gtrsim \exp\left( - 5\beta \frac{\diam(\Omega)}{\inrad(\Omega)}\right).$$
Combining this with the second order derivatives of $v$ (derived above) implies the result.

\end{proof}

\section{Proof of the Propositions}
\subsection{Proof of Proposition 1}

This argument is much simpler and relies on a series of existing results based around the Makar-Limanov function. Since Makar-Limanov's original paper
\cite{makar} is not easy to find, we explicitly point out the papers of Henrot, Lucardesi \& Philippin \cite{henrot} and Keady \& McNabb \cite{keady} who describe the original approach.
\begin{proof}
Let
\begin{align*}
 -\Delta u &=1 \qquad \mbox{in}~\Omega  \\
u &= 0 \qquad \mbox{on}~\partial \Omega.
\end{align*}
We employ the Makar-Limanov function $P(u)$
$$ P(u) = \left\langle \nabla u, (D^2u) \nabla u\right\rangle - |\nabla u|^2 \Delta u + u\left( (\Delta u)^2 - D^2u \cdot D^2u\right),$$
where $\cdot$ denotes the Hadamard product. $P$ is superharmonic and therefore assumes its minimum on the boundary
$$P \geq \min P\big|_{\partial \Omega}.$$
A standard bound for this quantity is given by Payne \& Philippin \cite{payne2}
$$ P\big|_{\partial \Omega} \geq \frac{1}{8}\frac{ \min_{\partial \Omega}{\kappa} }{ \max_{\partial \Omega}{\kappa^3}},$$
where $\kappa$ is the curvature of the boundary of $\Omega$. Let us now assume w.l.o.g. that $x_0 = (0,0) \in \mathbb{R}^2$ and that
a local Taylor expansion (after possibly rotating the domain) is given by
$$ u(x,y) = \|u\|_{L^{\infty}(\Omega)} - \frac{a}{2} x^2 -  \frac{1-a}{2}y^2 + \mbox{l.o.t.}$$
The Makar-Limanov function in $x_0$ simplifies to
\begin{align*}
 P(u)\big|_{x_0} &= u\left( (\Delta u)^2 - D^2u \cdot D^2u\right)  \big|_{x_0} \\
&= \|u\|_{L^{\infty}(\Omega)} \left(1 - (a^2 + (1-a)^2) \right)\\
&\sim \|u\|_{L^{\infty}(\Omega)} \min(a,1-a),
\end{align*}
where the last equivalence  is valid for $a$ close to either 0 or 1 (the only case of interest).
Then
$$ \min(a,1-a) \gtrsim  \frac{\min P\big|_{\partial \Omega}}{\|u\|_{L^{\infty}(\Omega)}} \gtrsim \frac{1}{\|u\|_{L^{\infty}(\Omega)}} \frac{ \min_{\partial \Omega}{\kappa} }{ \max_{\partial \Omega}{\kappa^3}}.$$
Finally, we use the classical estimate for convex domains that 
$$ \|u\|_{L^{\infty}(\Omega)} \sim \inrad(\Omega)^2$$
and this gives the result.
\end{proof}

\subsection{Proof of Proposition 2}
\begin{proof} Integration by part yields 
$$ |\widehat{f}(k)| \lesssim \frac{\|f'\|_{L^1(\mathbb{T})}}{|k|}.$$
We observe that
$$ \sum_{|k| \geq n}{| \widehat{f}(k) |^2 } \lesssim  \sum_{|k| \geq n}{  \frac{\|f'\|^2_{L^1(\mathbb{T})}}{k^2} } \sim  \frac{\|f'\|^2_{L^1(\mathbb{T})}}{n}$$
is less than $\|f\|^2_{L^2(\mathbb{T})}$ for $n \gtrsim \|f'\|^2_{L^1(\mathbb{T})}\|f\|^{-2}_{L^2(\mathbb{T})}$, which implies, for suitable $c>0$ that
$$  \sum_{|k| \leq  c\|f'\|^2_{L^2(\mathbb{T})}\|f\|^{-2}_{L^1(\mathbb{T})}}{| \widehat{f}(k) |^2 } \geq \frac{\|f\|_{L^2(\mathbb{T})} }{2},$$
which then implies the first result via
\begin{align*}
 \| \theta_1 *f  \|_{L^2(\mathbb{T})} &= \sum_{k \in \mathbb{Z}}{e^{-k^2 t} |\widehat{f}(k)|^2} \geq  \sum_{|k| \leq  c\|f'\|^2_{L^1(\mathbb{T})}\|f\|^{-2}_{L^2(\mathbb{T})}}{e^{-k^2 t} |\widehat{f}(k)|^2}\\
&\gtrsim \exp\left( -c \frac{\|f'\|^4_{L^1(\mathbb{T})}}{\|f\|^4_{L^2(\mathbb{T})}} \right) \sum_{|k| \leq  c\|f'\|^2_{L^1(\mathbb{T})}\|f\|^{-2}_{L^2(\mathbb{T})}}{|\widehat{f}(k)|^2} \\
&\gtrsim  \exp\left( -c \frac{\|f'\|^4_{L^1(\mathbb{T})}}{\|f\|^4_{L^2(\mathbb{T})}} \right)  \frac{\|f\|_{L^2(\mathbb{T})} }{2} .
\end{align*}
 The second result follows more immediately from
$$  \sum_{k \geq n}{| \widehat{f}(k) |^2 } \lesssim  \frac{1}{n^2}\sum_{|k| \geq n}{  |k|^2 |\widehat{f}(k)|^2 } \lesssim  \frac{\|f'\|^2_{L^2(\mathbb{T})}}{n^2}$$
and then proceeding as above.
\end{proof}

\end{document}